\spnewtheorem{notation}{Notation}{\bfseries}{}
\newcommand{\Z}{\mathbb{Z}}
\newcommand{\R}{\mathcal{R}}
\newcommand{\set}[2]{\{ #1 \: | \: #2 \}}
\newcommand{\id}{\mathrm{id}}
\newcommand{\cyl}{\mathrm{Cyl}}
\begin{document}
\title{Reversibility, balance and expansivity of non-uniform cellular automata\thanks{Supported by the Magnus Ehrnrooth foundation and the Research Council of Finland.}}
%
%
\author{Katariina Paturi\orcidID{0009-0007-2475-1393}}
\authorrunning{K. Paturi.}
%
\institute{University of Turku, Vesilinnantie 5, 20500 Turku, Finland\\
\email{katariina.paturi@utu.fi}
}
\maketitle              
\begin{abstract}
Non-uniform cellular automata (\textsc{nuca}) are an extension of cellular automata (\textsc{ca}), which transform cells according to multiple different local rules.  A \textsc{nuca} is defined by a configuration of local rules called a local rule distribution. We examine what properties of uniform \textsc{ca} can be recovered by restricting the rule distribution to be (uniformly) recurrent, focusing on only 1D \textsc{nuca}.

We show that a bijective \textsc{nuca} with a uniformly recurrent rule distribution is reversible. We also show that if a \textsc{nuca} is surjective and has a recurrent rule distribution, or if it is bijective, then it is balanced. We present an example of a \textsc{nuca} which has a non-empty and non-residual set of equicontinuity points, and one which is not sensitive but has no equicontinuity points. Finally, we show that (positively) expansive \textsc{nuca} are sensitive.

\keywords{Non-uniform cellular automata \and Symbolic dynamics \and Reversibility.}
\end{abstract}
\section{Introduction}

Non-uniform cellular automata (\textsc{nuca}) are an extension of cellular automata (\textsc{ca}), which are machines that operate on configurations - grids of cells coloured by an alphabet. In both, each cell simultaneously observes its neighbouring cells and changes its state depending on the state of the neighbourhood. A uniform \textsc{ca} transforms all cells with the same local rule, while while a \textsc{nuca} uses different local rules in different cells. The local rules of a \textsc{nuca} are given by a fixed local rule distribution. 

Cellular automata are a powerful tool in simulating systems based on local transformations, and \textsc{nuca} provide a generalization capable of simulating systems with variation in their local behaviour. The dynamical properties of uniform \textsc{ca} have been studied extensively, but many of them no-longer apply in the setting of \textsc{nuca}. For example, the well-known Garden of Eden theorem for uniform \textsc{ca} does not hold for \textsc{nuca}.  However, some properties, such as the Garden of Eden theorem, can be recovered in \textsc{nuca} when the distribution of local rules is recurrent or uniformly recurrent \cite{goe}. 

The properties we examine are reversibility, balance, sensitivity and expansivity. There are implications known to hold for \textsc{ca} which don't necessarily hold for \textsc{nuca} in general: all bijective \textsc{ca} are reversible, all surjective \textsc{ca} are balanced, a \textsc{ca} is either sensitive  or has a dense set of equicontinuity points, and a (positively) expansive \textsc{ca} is sensitive. We find (positively) expansive \textsc{nuca} are still sensitive, but the other statements all have counter-examples. We also show that the reversibility of bijective \textsc{nuca} can be recovered when the rule distribution is uniformly recurrent, and the balance of surjective \textsc{nuca} when the distribution is recurrent. Additionally we show that bijective \textsc{nuca} are balanced. 

\section{Definitions}

Non-uniform cellular automata may be defined over any group, but in this work we only consider the 1-dimensional case of \textsc{nuca} over $\Z$. First, a few background definitions are needed.

\begin{definition}
Let $\Sigma$ be a finite set called a \emph{state set}. A \emph{configuration (on $\Z$)}  is an element $c \in \Sigma^\Z$.
\end{definition}

\begin{definition}
A \emph{cell} is an element $x\in \Z$. A \emph{domain} is any subset $D\subseteq \Z$. If $D$ is finite, we call it a \emph{finite domain}. 

Let $\Sigma$ be a state set and $c \in \Sigma^\Z$ a configuration. A \emph{pattern} is any $p \in \Sigma^D$ for some domain $D$. If $D$ is finite, we call $p$ a \emph{finite pattern}. We use $c_{|D}\in \Sigma^D$ to denote the pattern such that for any $x\in D$, $c_{|D} (x) = c(x)$.
\end{definition}


The space $\Sigma^\Z$ equipped with Cantor's topology and is known to be compact \cite{cecc}. In contrast with uniform \textsc{ca}, which are defined by a local rule, a \textsc{nuca} is defined by a local rule distribution, which is a special configuration that determines which local rule is used in which cell.

\begin{definition}
Let $\Sigma$ be a state set. A \emph{radius $r$ local rule} is a function $f:\Sigma^{2r+1} \rightarrow \Sigma$. A finite set of local rules $\mathcal{\R}$ is called a \emph{rule set}.
\end{definition}

Note that defining local rules with radial neighbourhoods is equivalent to defining them with arbitrarily shaped neighbourhoods. Now we are ready to define \textsc{nuca}.

\begin{definition}
A \emph{non-uniform cellular automaton} (or \textsc{nuca}) is a tuple $A = (\Sigma, \R,\theta)$, where $\Sigma$ is a state set, $\R$ is a rule set, and $\theta \in \R^\Z$ is a rule distribution. The \emph{global transition function} of $A$ is the function $H_\theta:\Sigma^\Z \rightarrow \Sigma^\Z$ such that for $c\in \Sigma^\Z$, $x \in \Z$, 
\begin{align*}
H_\theta (c)(x) = \theta(x) (c(x-r),\ldots,c(x+r)),
\end{align*}
where $r \in \Z_+$ is the radius of the local rule $\theta (x)$. If $|\R| = 1$ then the \textsc{nuca} is a uniform cellular automaton (or \textsc{ca}).
\end{definition}

We generally equate a \textsc{nuca} with its transition function. Let us define a few more concepts used throughout the work. The definitions of recurrence and uniform recurrence are equivalent with the usual topological definitions for subshifts.

\begin{definition}
Let $\theta \in \R^\Z$ be a rule distribution, and suppose $\theta (x)$ is a radius $r$ rule. We call the domain $N_\theta(x)=[x-r,x+r]$ the \emph{neighbourhood of $x$}. If $D \subseteq \Z$ is a domain, we call the domain $N_\theta (D) = \bigcup_{x\in D} N_\theta (x)$ the \emph{neighbourhood of $D$}.
\end{definition}

\begin{definition}
Let $\Sigma$ be a state set. The \emph{left-shift} is the function $\sigma:\Sigma^\Z \rightarrow \Sigma^\Z$ which maps $c \in \Sigma^\Z$ such that $\sigma (c)(x) = c(x+1)$ for all $x\in \Z$. Similarly, we call the function $\sigma^{-1}$ the \emph{right shift}. Note that $\sigma \circ H_\theta = H_{\sigma(\theta)} \circ \sigma$.

If $p\in \Sigma^D$ and $q\in \Sigma^C$ are patterns and there is $k \in \Z$ such that $C = D+k$ and $p(x) = q(x+k)$ for any $x \in D$, then we call $q$ a \emph{translated copy} (or simply \emph{copy}) of $p$. If $D \cap (D+k) = \emptyset$, the copies are \emph{disjoint}.
\end{definition}

\begin{definition}
Let $c \in \Sigma^\Z$ be a configuration. The configuration $c$ is \emph{(spatially) recurrent} if for any finite $D\subseteq \Z$ there is $C \subseteq \Z$ such that $D \neq C$ and $c_{|C}$ is a copy of $c_{|D}$. The configuration $c$ is \emph{(spatially) uniformly recurrent} if for any finite $D\subseteq \Z$ there is $n \in \Z_+$ such that for any $x \in \Z$, there is $C \subseteq [x,x+n]$ such that  $c_{|C}$ is a copy of $c_{|D}$. The same concepts are defined for rules rule distributions $\theta \in  \R^\Z$ when $\R$ is finite.
\end{definition}

\section{Reversibility}

Informally, we call a \textsc{nuca} $H$ reversible if there exists an inverse \textsc{nuca} $H^{-1}$, such that $H\circ H^{-1} = H^{-1} \circ H$ is the identity function. In uniform \textsc{ca} reversibility is equivalent with bijectivity, which in turn is equivalent with injectivity. 

\begin{definition}
Let $\Sigma$ be a state set. By $\id:\Sigma^\Z \rightarrow \Sigma^\Z$ we denote the identity function.
\end{definition}

\begin{definition}
Let $\R_1$ be a rule set and $\theta \in \R_1^\Z$. If there exists finite rule set $\R_2$ and $\phi \in \R_2^\Z$ such that $H_\phi = H_\theta^{-1}$, then the automaton $H_\theta$ is \emph{reversible}.
\end{definition}

Obviously, a reversible \textsc{nuca} is bijective. For uniform \textsc{ca}, the converse requires a non-trivial proof based on the compactness of the configuration space $\Sigma^\Z$. If we allow \textsc{nuca} to have infinitely many rules, the converse is also simple. This follows from the fact that \textsc{nuca} (allowing infinitely many rules) are exactly the continuous functions over the Cantor set \cite{dennuz}, and the fact that the inverse of a bijective continuous function is also continuous. Then the inverse of a bijective \textsc{nuca} is a continuous function over the Cantor set, thus also being a \textsc{nuca}.


It is then natural to ask if this is true restricting ourselves to \textsc{nuca} with finitely many rules only. The following is an example of a bijective \textsc{nuca} which is not reversible.

\begin{example}
We denote $a \oplus b = a + b \mod 2$. Let $\Sigma = \Z_2$ and $\R = \{f_L, f_R, g \}$ be a set of radius $1$ rules over $\Sigma$, which map
\begin{align*}
f_L(a,b,c) &= a \oplus b, \\
f_R(a,b,c) &= b \oplus c, \\
g(a,b,c) &= b,
\end{align*}
where $a,b,c \in \Sigma$. For all $n \in \Z_+$, let $u_n = (f_R)^n g (f_L)^n$ and let $w_n =  u_1 \ldots u_n$. Then, let $\theta = \ldots w_3 w_2 w_1 w_2 w_3 \ldots$ be the rule distribution.

The distribution $\theta$ is clearly recurrent. First, let's show that $H_\theta$ is injective. Let $c,e \in \Sigma^\Z$ be such that $c(x) \neq e(x)$ for some $x \in \Z$. If $\theta (x) = g$, then $H_\theta (c)(x) = c(x)\neq e(x)= H_\theta (e)(x)$. Suppose then $\theta (x) = f_R$. Then there is $k\in \Z_+$ such that  $\theta(x+i) = f_R$ when $0 \leq i < k$ and $\theta(x+k) = g$. 

Now suppose $H_\theta (c)(x+i) = H_\theta (e)(x+i)$ when $0 \leq i < k$. If $c(x+j) \neq e(x+j)$ where $0 \leq j < k$, then $c(x+j+1) \neq e(x+j+1)$ also, because otherwise it would hold that
\begin{align*}
H_\theta (c)(x+j) &= c(x+j) \oplus c(x+j+1) \\
&= c(x+j) \oplus e(x+j+1)\\
&\neq e(x+j) \oplus e(x+j+1) = H_\theta(e)(x+j)
\end{align*}
Now inductively, $c(x+i) \neq e(x+i)$ when  $0 \leq i \leq k$. Then because $c(x+k) \neq e(x+k)$ and $\theta(x+k)=g$, we have $H_\theta(c)(x+k) \neq H_\theta(e)(x+k)$. In any case, $H_\theta (c) \neq H_\theta (e)$.

The case that $\theta(x) = f_L$ is symmetrical. Therefore $H_\theta$ is injective. Because $\theta$ is recurrent, by the Garden of Eden theorem, $H_\theta$ is then also bijective \cite{goe}. Next we show that $H_\theta$ is not reversible with a finite rule set. 

Suppose there is a finite rule set $\R'$ and $\phi \in \R'^\Z$ such that $H_\phi = H_\theta^{-1}$. Then there is some $r\in \Z_+$ such that any rule in $\mathcal{R}'$ is at most radius $r$. Somewhere in $\theta$, there is a length  $2r+1$ segment containing only the rule $f_R$; let $x \in \Z$ be the centermost cell of such a segment. Let then $c,e \in \Sigma^\Z$ be such that $c(y) = 0$ and $e(y) = 1$ for all $y\in \Z$. Then for all $z \in [x-r,x+r]$,
\begin{align*}
H_\theta(c)(z) = 0 \oplus 0 = 1 \oplus 1 = H_\theta(e)(z).
\end{align*}
Suppose $\phi(x)(0,\ldots,0) = a$ for some $a \in \Sigma$. Then 
\begin{align*}
c(x) = (H_\phi \circ H_\theta) (c) (x) = a = (H_\phi \circ H_\theta) (e) (x) = e(x),
\end{align*}
which is a contradiction. Therefore $H_\theta$ is not finitely reversible.
\end{example}

Note that the rule distribution in the previous counter-example is recurrent. A simpler, yet non-recurrent counter-example in the notation of the previous example would be distribution $^\infty (f_R)g(f_L)^\infty$. Next we show that if a bijective \textsc{nuca} is defined by a uniformly recurrent rule distribution, then it is reversible. To do so, we use the notion of stable injectivity. 

\begin{definition}
Let $\theta\in \R^\Z$ be a rule distribution. The \emph{orbit} of $\theta$ is the set $\mathcal{O}(\theta) = \set{\sigma^k (\theta)}{k\in \Z}$. The \emph{orbit closure} $\overline{\mathcal{O}(\theta)}$ is the topological closure of $\mathcal{O}(\theta)$. We call $H_\theta$ \emph{stably injective} if for any $\phi \in \overline{\mathcal{O}(\theta)}$, the \textsc{nuca} $H_\phi$ is injective. \cite{phung}
\end{definition}

\begin{lemma} \label{lemma_inj}
Let $\theta \in \R^\Z$ be a uniformly recurrent rule distribution such that $H_\theta$ is injective. Then $H_\theta$ is stably injective.
\end{lemma}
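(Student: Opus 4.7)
The plan is to argue by contradiction: suppose that some $\phi \in \overline{\mathcal{O}(\theta)}$ has $H_\phi$ not injective, and deduce that $H_\theta$ is also not injective. Since $\theta$ is uniformly recurrent, the orbit closure $X := \overline{\mathcal{O}(\theta)}$ is a minimal subshift of $\R^\Z$, so every $\phi \in X$ is itself uniformly recurrent and $X$ admits no proper non-empty closed shift-invariant subset. The high-level strategy is to package the non-injectivity witnesses into a closed, shift-invariant subset of $X \times \Sigma^\Z \times \Sigma^\Z$ whose projection to $X$ must equal all of $X$ by minimality, thereby placing $\theta$ in this set.

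Concretely, for $M \in \Z_+$ let $Z_M \subseteq X \times \Sigma^\Z \times \Sigma^\Z$ be the set of triples $(\phi', c, e)$ with $H_{\phi'}(c) = H_{\phi'}(e)$ and with $\{y \in \Z : c(y) \neq e(y)\}$ intersecting every length-$(M+1)$ interval. The first condition is closed by continuity of $H_{\phi'}(c)$ in $(\phi', c)$ (each output cell is locally determined), the bounded-gap condition is closed in the Cantor topology (its complement is a union of basic open sets), and invariance under the diagonal shift $(\phi', c, e) \mapsto (\sigma(\phi'), \sigma(c), \sigma(e))$ follows from the identity $\sigma \circ H_{\phi'} = H_{\sigma(\phi')} \circ \sigma$ together with the translation-invariance of the gap bound. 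The projection $\pi_X(Z_M) \subseteq X$ is therefore closed and shift-invariant, and if $Z_M$ is non-empty then minimality of $X$ forces $\pi_X(Z_M) = X \ni \theta$, yielding distinct $c, e$ with $H_\theta(c) = H_\theta(e)$.

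The whole argument thus hinges on producing, from a hypothetical $\phi \in X$ with $H_\phi$ non-injective, some triple in $Z_M$, and this is the main obstacle: an arbitrary witness $(c, e)$ may differ only on a finite or otherwise sparse set and so lie outside every $Z_M$. I would construct a syndetic witness in two steps. First, truncate to finite support: given any witness with $c(y_0) \neq e(y_0)$, exploit long agreement intervals of $(c, e)$ (which exist on any side of $y_0$ where the difference set is not already syndetic) to splice $c$ and $e$ into new configurations $c', e'$ that coincide outside a bounded window. Placing each cut in the middle of an agreement interval of length at least $2R + 1$, where $R$ is the maximum rule radius, makes the neighbourhood of every cell near the cut identical under $c'$ and $e'$, so $H_\phi(c') = H_\phi(e')$ is preserved. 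Second, replicate using uniform recurrence of $\phi$: the local rule pattern of $\phi$ spanning the finite difference window recurs at a syndetic set of positions in $\phi$, and inserting a translated copy of the modification at each such position, separated by buffers wide enough to prevent neighbourhood overlap, yields configurations $c'', e'' \in \Sigma^\Z$ with $H_\phi(c'') = H_\phi(e'')$ and syndetic difference set. Then $(\phi, c'', e'') \in Z_M$ for the resulting gap bound $M$, and the minimality argument closes the proof.
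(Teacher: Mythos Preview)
Your overall strategy---encoding non-injectivity witnesses with syndetic difference set as a closed shift-invariant subset $Z_M \subseteq X \times \Sigma^\Z \times \Sigma^\Z$ and invoking minimality of $X = \overline{\mathcal{O}(\theta)}$---is sound and genuinely different from the paper's. The paper instead splits on whether $H_\phi$ is pre-injective: in the non-pre-injective case it transplants a finite-support witness directly into $\theta$ using that every pattern of $\phi$ occurs in $\theta$, and in the pre-injective case it shows the difference set meets every $(2r{+}1)$-interval on one half-line, then uses $\theta \in \overline{\mathcal{O}(\phi)}$ (minimality) to choose shifts $\sigma^{k_i}(\phi) \to \theta$ along that half-line and passes to limits of the shifted witness pair. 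Your formulation is cleaner in that minimality does all the work once some $Z_M$ is shown non-empty; the paper's approach is more hands-on but ends up at the same compactness idea in its second case.

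There is, however, a gap in your production of a $Z_M$-element. Your Step~1 claims to reduce an arbitrary witness $(c,e)$ to one of finite support by splicing across long agreement intervals, but such intervals are guaranteed only on a side where the difference set is \emph{not} already syndetic---as your own parenthetical concedes. If the difference set is syndetic to the right of $y_0$ (so no agreement interval of length $\ge 2R+1$ exists there) but has unbounded gaps to the left, you can splice on the left but not on the right; the resulting witness has its difference set contained in a half-line and syndetic there, so it is neither of finite support nor in any $Z_M$, and your Step~2 replication does not apply to it. The fix is short and is essentially the paper's Case~2: take an accumulation point of $(\sigma^n(\phi), \sigma^n(c'), \sigma^n(e'))$ as $n \to +\infty$; by compactness this exists, lies in $Z_{2R}$, and has first coordinate in $X$, after which your minimality argument closes the proof.
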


\begin{proof}
Let $\phi \in \overline{\mathcal{O}(\theta)}$. Suppose $H_\phi$ is not injective. Then there are two cases: suppose first that $H_{\phi}$ is not pre-injective. Let then $c'',e'' \in \Sigma^\Z$ be asymptotic configurations such that $c''\neq e''$ and $H_{\phi} (c'') = H_{\phi} (e'')$, and $x,y\in \Z$ such that for any cell $z \notin [x,y]$, $c''(z) = e''(z)$. Because $\phi \in \overline{\mathcal{O}(\theta)}$, any pattern in $\phi$ has a copy in $\theta$. Denote $D=[x-r,y+r]$ and let $m \in \Z$ be such that $\theta_{|(D+m)}$ is a copy of $\phi_{|D}$. Then because for any cell $z \in (D+m)$, $\theta(z)=\phi(z-m)$, so 
\begin{align*}
H_\theta (\sigma^{-m} (c''))(z) = H_{\phi} (c'')(z-m) = H_{\phi} (e'')(z-m) =H_\theta (\sigma^{-m} (e''))(z),
\end{align*}
and for any cell $z \notin (D+m)$, $\sigma^{-m} (c'')_{|[z-r,z+r]}=\sigma^{-m} (e'')_{|[z-r,z+r]}$, we have $H_\theta (\sigma^{-m} (c'')) = H_\theta (\sigma^{-m} (e''))$. Then $H_\theta$ is not pre-injective, which is a contradiction.

Suppose then that $H_{\phi}$ is pre-injective. Then either every length $2r+1$ segment to the right of the origin 0 contains a cell where $c'$ and $e'$ differ, or the same holds for every $2r+1$ segment to the left of the origin. Assume without loss of generality the first case, that is for every $k>0$ there is $i \in [-r,r]$ such that $c'(k+i) \neq e'(k+i)$.

Because $\theta$ is uniformly recurrent, so is $\phi$ and $\theta \in \overline{\mathcal{O} (\phi)}$. Hence every finite segment of $\theta$ appears in $\phi$ in every segment of some length $n>0$. Then there exists $k_1, k_2, k_3, \ldots > 0$ such that $\sigma^{k_1} (\phi),\sigma^{k_2} (\phi),\ldots $ converges to $\theta$. Select then a subsequence  $k_{i_1},k_{i_2},\ldots$ such that $c'^{k_{i_1}},c'^{k_{i_2}},\ldots$ and $e'^{k_{i_1}},e'^{k_{i_2}},\ldots$ converge to some $c''$ and $e''$ respectively. Then by the previous we have $c''_{|[-r,r]}\neq e''_{|[-r,r]}$, and by continuity of \textsc{nuca}, $H_\theta (c'') = H_\theta (e'')$. Then $H_\theta$ is not injective, which is a contradiction. 
\qed
\end{proof}

Note that Lemma \ref{lemma_inj} doesn't hold in general, even in $\Z^2$. Since bijective, stably injective NUCA are reversible \cite{phung}, we can prove the following theorem.

\begin{theorem} \label{unif_revers_fin}
Let $\R$ be a finite rule set and $\theta\in \R^\Z$ a uniformly recurrent rule distribution such that $H_\theta$ is injective. Then $H_\theta$ is reversible.
\end{theorem}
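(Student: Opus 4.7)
The plan is to assemble the theorem as a short corollary of three ingredients already available: the Garden of Eden theorem for \textsc{nuca} with recurrent rule distributions \cite{goe}, Lemma \ref{lemma_inj}, and the result of \cite{phung} that bijective and stably injective \textsc{nuca} are reversible.

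First I would upgrade injectivity to bijectivity. Since $\theta$ is uniformly recurrent, it is in particular recurrent, so the Garden of Eden theorem applies exactly as it was invoked in the example preceding Lemma \ref{lemma_inj}: an injective \textsc{nuca} with a recurrent rule distribution over a finite rule set is surjective, hence bijective. Next, I would apply Lemma \ref{lemma_inj} directly to $\theta$: its hypotheses (finite $\R$, uniformly recurrent $\theta$, $H_\theta$ injective) are exactly those of our theorem, so $H_\theta$ is stably injective. Finally, having both bijectivity and stable injectivity of $H_\theta$, I would cite \cite{phung} to conclude that $H_\theta$ is reversible, which is the desired conclusion.

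There is no genuine obstacle left at this stage: all the topological compactness and transport-of-patterns work has been absorbed into Lemma \ref{lemma_inj}, and the algebraic/dynamical content of passing from stable injectivity to an inverse \textsc{nuca} with finitely many rules is imported as a black box from \cite{phung}. The only small thing to check, which I would mention explicitly for the reader, is that uniform recurrence does imply recurrence (so that the Garden of Eden theorem is indeed applicable) and that the orbit closure used in the definition of stable injectivity is the same object appealed to in both Lemma \ref{lemma_inj} and \cite{phung}, so that the three results compose cleanly without any hidden mismatch in hypotheses.
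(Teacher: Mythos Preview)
Your proposal is correct and follows essentially the same approach as the paper: apply Lemma \ref{lemma_inj} for stable injectivity, use the Garden of Eden theorem \cite{goe} to get bijectivity from injectivity (via recurrence), and then invoke \cite{phung} to obtain reversibility. The paper's proof additionally spells out that \cite{phung} yields a left inverse $H_\phi\circ H_\theta=\id$ and then uses surjectivity of $H_\theta$ to upgrade this to a two-sided inverse, but this is just unpacking the same citation you treat as a black box.
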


\begin{proof}
 By Lemma \ref{lemma_inj}, $H_\theta$ is stably injective. Because $\theta$ is recurrent and $H_\theta$ is injective, then $H_\theta$ is bijective \cite{goe}. Then there is a finite rule set $\mathcal{R}_2$ and $\phi \in \R_2^\Z$, such that $H_\phi \circ H_\theta = \id$ \cite{phung}. Now because $H_\theta$ is surjective,  we have $H_\phi = H_\theta^{-1}$.

\qed
\end{proof}
 
We can also show that the inverse automaton of a reversible \textsc{nuca} with a uniformly recurrent rule distribution will also have a uniformly recurrent rules distribution, up to renaming rules.

\begin{definition}
Let $f:\Sigma^{2r+1} \rightarrow \Sigma$ and $g:\Sigma^{2R+1} \rightarrow \Sigma$ be radius $r$ and $R$ local rules respectively for some $r,R\in \Sigma^\Z$. Assume $r \leq R$. The rules $f$ and $g$ are considered \emph{identical}, if for any $a_{-R},\ldots,a_{R} \in \Sigma$ it holds that
\begin{align*}
g(a_{-R},\ldots,a_{R}) = f(a_{-r},\ldots,a_r).
\end{align*}
\end{definition}

\begin{theorem} \label{unif_revers_unif}
Let $\R_1$ be a finite rule set and $\theta \in \R_1^\Z$ uniformly recurrent. Let $\R_2$ be a rule set containing no pair of different identical rules, and $\phi \in \R_2^\Z$ a distribution such that $H_\phi = H_\theta^{-1}$. Then $\phi$ is uniformly recurrent.
\end{theorem}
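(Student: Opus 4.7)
The plan is to exhibit $\phi$ as the image of $\theta$ under a continuous, shift-equivariant map between orbit closures. Because uniform recurrence of a configuration is equivalent to minimality of its orbit closure, and the continuous shift-equivariant image of a minimal subshift is again minimal, uniform recurrence of $\phi$ will follow once such a map is built.

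Concretely, I would define $\Phi \colon \overline{\mathcal{O}(\theta)} \to \R_2^\Z$ by sending each $\theta' \in \overline{\mathcal{O}(\theta)}$ to a distribution $\phi' \in \R_2^\Z$ with $H_{\phi'} = H_{\theta'}^{-1}$. For existence, given a sequence $\sigma^{k_n}(\theta) \to \theta'$, I would extract a subsequence along which $\sigma^{k_{n_m}}(\phi)$ converges in the compact space $\R_2^\Z$ to some $\phi'$. Since the assignment $\vartheta \mapsto H_\vartheta$ is continuous into the uniform topology on continuous maps $\Sigma^\Z \to \Sigma^\Z$, and composition there is jointly continuous, the identities $H_{\sigma^{k_{n_m}}(\phi)} \circ H_{\sigma^{k_{n_m}}(\theta)} = \id = H_{\sigma^{k_{n_m}}(\theta)} \circ H_{\sigma^{k_{n_m}}(\phi)}$ pass to the limit, yielding $H_{\phi'} = H_{\theta'}^{-1}$. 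Uniqueness of such a $\phi' \in \R_2^\Z$ is exactly the content of the ``no pair of different identical rules'' hypothesis on $\R_2$: two distributions in $\R_2^\Z$ realizing the same \textsc{nuca} must agree cell-by-cell.

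Uniqueness together with compactness of $\R_2^\Z$ immediately gives continuity of $\Phi$ (any subsequential limit of $\Phi(\theta_n)$ for $\theta_n \to \theta'$ is forced to be $\Phi(\theta')$), and shift-equivariance follows from $\sigma \circ H_{\theta'}^{-1} \circ \sigma^{-1} = H_{\sigma(\theta')}^{-1}$ combined with uniqueness. Consequently $\Phi(\overline{\mathcal{O}(\theta)})$ is a shift-invariant continuous image of the minimal subshift $\overline{\mathcal{O}(\theta)}$, hence is itself minimal; since it contains $\phi = \Phi(\theta)$, it coincides with $\overline{\mathcal{O}(\phi)}$, which is therefore minimal, proving $\phi$ uniformly recurrent. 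Equivalently, the Curtis--Hedlund--Lyndon theorem presents $\Phi$ as a sliding block code $\phi(x) = \psi(\theta_{|[x-N,x+N]})$ for some uniform $N$, from which uniform recurrence of $\phi$ is immediate from that of $\theta$.

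The main obstacle is the well-definedness of $\Phi$ on the whole orbit closure, i.e.\ showing that for every $\theta' \in \overline{\mathcal{O}(\theta)}$ the inverse $H_{\theta'}^{-1}$ is again representable by a distribution drawn from the same finite rule set $\R_2$ rather than from some potentially larger set. This is exactly where compactness of $\R_2^\Z$ (to extract $\phi'$) and the no-identical-rules hypothesis (to rule out ambiguity in the limiting cell-by-cell rule assignment) do the work; the stable injectivity of Lemma \ref{lemma_inj} is not strictly required here, since the composition argument recovers bijectivity of $H_{\theta'}$ directly in the limit.
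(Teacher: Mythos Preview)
Your proof is correct and takes a genuinely different, more topological route than the paper's. The paper argues directly that $\phi(x)$ is determined by $\theta_{|[x-r,x+r]}$, where $r$ is the maximum radius of any rule in $\R_1 \cup \R_2$: if the $\theta$-windows of width $2r+1$ centred at $x$ and at $y$ are translates of one another, then using $H_\phi \circ H_\theta = \id = H_{\sigma^k(\phi)} \circ H_{\sigma^k(\theta)}$ together with surjectivity of $H_{\sigma^k(\theta)}$ one deduces that $\phi(x)$ and $\phi(y)$ compute the same function, whence $\phi(x)=\phi(y)$ by the no-identical-rules hypothesis. This exhibits $\phi$ as an explicit sliding block code of radius $r$ applied to $\theta$, and uniform recurrence of $\phi$ then follows from that of $\theta$ by a direct counting of window occurrences. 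Your approach instead builds the factor map $\Phi\colon \overline{\mathcal O(\theta)}\to \R_2^\Z$ abstractly via compactness and passes minimality through it (or alternatively invokes Curtis--Hedlund--Lyndon at the end). The paper's argument is more elementary and yields the concrete radius $r$ for the block code without any limiting process; your argument is conceptually cleaner, makes the role of the no-identical-rules hypothesis as a uniqueness statement for $\Phi$ completely transparent, and would carry over verbatim to \textsc{nuca} over more general groups.
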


\begin{proof} 
Let $r\in \Z_+$ be at least as large as the radius of any rule in $\R_1$ or $\R_2$. We show that for any $x,y \in \Z$, if $\theta_{|[x-r,x+r]} = \sigma^k (\theta_{|[y-r,y+r]})$, where $k=y-x$, then $\phi(x) = \phi(y)$. For all $c \in \Sigma^\Z$ we have $H_\theta (c)_{[x-r,x+r]} = H_{\sigma^k (\theta)} (c)_{[x-r,x+r]}$. Then applying the local rule $\phi(x)$, we have
\begin{align*}
H_\phi (H_\theta (c)) (x) = H_\phi (H_{\sigma^k (\theta)} (c)) (x).
\end{align*}
Since $H_\phi \circ H_\theta = \id = H_{\sigma^k (\phi)} \circ H_{\sigma^k (\theta)}$, we then have
\begin{align*}
H_{\sigma^k(\phi)} (H_{\sigma^k (\theta)} (c)) (x) = H_\phi (H_{\sigma^k (\theta)} (c)) (x).
\end{align*}
Because $H_{\sigma^k (\theta)}$ is surjective, for any $e \in \Sigma^\Z$ there is $c \in \Sigma^\Z$ such that $H_{\sigma^k (\theta)}(c) = e$. Therefore for any $e\in \Sigma^\Z$, 
\begin{align*}
H_{\sigma^k(\phi)} (e) (x) = H_\phi (e) (x).
\end{align*}
Since there are no identical rules in $\R_2$, $\sigma^k(\phi)(x) = \phi(x)$. Now because $\theta$ is uniformly recurrent, for any $m \in \Z_+$, and any segment $D \subseteq \Z$ of length $|D|=m+2r$, there is $n \in \Z_+$ such that  a copy of $\theta_{|D}$ appears in every length $n$ segment of $\theta$. Then by the previous, for every segment $D'$ of length $|D'|=m$ there is $n\in \Z_+$ such that a copy of $\phi_{|D'}$ appears in every segment of length $n$. Hence $\phi$ is uniformly recurrent.
\qed

\end{proof}

\section{Balance}

Balance is a property found in surjective \textsc{ca}, where every finite pattern in a given domain has the same number of pre-images. Balance is equivalent to invariance of the uniform measure. Surjective \textsc{nuca} are not necessarily balanced, as shown by the following example.

\begin{definition}
Let $\theta \in \R^{\Z^d}$ be a rule distribution with state set $\Sigma$. Let $D \subseteq \Z^d$ be a finite domain. The function $H_{\theta |D}: \Sigma^{N_\theta(D)} \rightarrow \Sigma^D$ is a \emph{non-uniform \textsc{ca} over finite domain} $D$, which maps $c_{|N_\theta (D)}$, where $c \in \Sigma^\Z$, to
\begin{align*}
H_{\theta|D} (c_{|N_\theta (D)}) = H_\theta (c)_{|D}.
\end{align*}
\end{definition}

\begin{definition}
Let $H_\theta$ be a \textsc{nuca}. We say $H_\theta$ is \emph{balanced}, when for any finite domain $D\subseteq \Z$ and pattern $p \in \Sigma^D$, if $E = N_\theta(D)$ and $\mathcal{A} = \set{q \in \Sigma^E}{H_{\theta|D}(q) = p}$, then $|\mathcal{A}| = |\Sigma|^{|E|-|D|}$.
\end{definition}


\begin{example}
Let $\Sigma = \Z_2$ and $\R = \{f,g\}$ be a set of radius $1$ rules, where
\begin{align*}
f(a,b,c) = a \oplus b \oplus c, \:
g(a,b,c) = \max (b,c), 
\end{align*}
Let $\theta \in \R^\Z$ be the rule distribution such that
\begin{align*}
\theta (x) &= \begin{cases}
g, \text{ if } x=0,\\
f, \text{ if } \text{otherwise.}
\end{cases}
\end{align*}
First, let's show $H_\theta$ is surjective. Let $c \in \Sigma^\Z$ be any configuration. Then let $e \in \Sigma^\Z$ be a configuration such that 
\begin{align*}
e(x) = \begin{cases}
0, \text{ if } x=-1 \text{ or } x=1\\
c(x), \text{ if } x=0,\\
c(x+1) \oplus e(x+1) \oplus e(x+2), \text{ if } x < -1,\\
c(x-1) \oplus e(x-1) \oplus e(x-2), \text{ if } x>1,
\end{cases}
\end{align*}
where $x \in \Z$. Now if $x \neq 0$ clearly $H_\theta (e)(x) = c(x)$, and if $x = 0$, $H_\theta (e)(x) = \max (c(x), 0) = c(x)$. Therefore $H_\theta (e) = c$, meaning $H_\theta$ is surjective.

On the other hand, let $D =\{0\}$ and $p' \in \Sigma^D$ be such that $p'(0) = 1$. Now the patterns $p \in \Sigma^{N_\theta(D)}$ such that $H_{\theta|D} (p) = p'$ are $001,010,011,101,110$ and $111$, for a total of $t = 6$ patterns. Then clearly $t = 6 \neq 4 = 2^2 =  |\Sigma|^{|N(D)|-|D|}$. 
\end{example}

Note that the \textsc{nuca} in the previous example is also pre-injective, that is, all configurations which differ in only finitely many cells have different images. Unlike in uniform \textsc{ca}, pre-injectivity and surjectivity aren't necessarily equivalent in \textsc{nuca} \cite{goe}.

It can be shown that a surjective \textsc{nuca} of any dimension with a uniformly recurrent rule distribution has the balance property, with a proof very similar to the proof on uniform \textsc{ca}. In the one-dimensional case, it is enough for the rule distribution to be recurrent. Note that surjectivity and pre-injectivity are equivalent in \textsc{nuca} with recurrent rule distributions \cite{goe}.

\begin{lemma} \label{ineq2} 
Let $s,n,r \in \Z_+$. For all sufficiently large $m \in \Z_+$, it holds that
\begin{align*}
(s^n-1)^m s^{k-nm} < s^{k-2r},
\end{align*}
for all $k \in \Z_+$. 
\end{lemma}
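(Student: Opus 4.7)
The plan is to observe that the dependence on $k$ in the proposed inequality is illusory. Both sides share a common factor of $s^k$: the left side equals $(s^n-1)^m s^{-nm} \cdot s^k$, and the right side equals $s^{-2r} \cdot s^k$. Dividing through by $s^k > 0$ reduces the statement to
\begin{align*}
(1 - s^{-n})^m < s^{-2r},
\end{align*}
which no longer involves $k$. Hence it suffices to find one threshold $m_0 = m_0(s,n,r)$ satisfying this single inequality; the same $m_0$ will then handle every $k \in \Z_+$ simultaneously.

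For the generic case $s \geq 2$, the base $1 - s^{-n}$ lies strictly in $(0,1)$, so $(1 - s^{-n})^m \to 0$ as $m \to \infty$. Taking logarithms yields the explicit threshold $m_0 > 2r \log s / (-\log(1 - s^{-n}))$, beyond which the reduced inequality holds. The degenerate case $s = 1$ is trivial: then $s^n - 1 = 0$, so the left side of the original inequality is $0$ while the right side equals $s^{k-2r} = 1$, and any $m \geq 1$ works.

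The only step requiring any thought is the initial factoring out of $s^k$; once the $k$-dependence is eliminated, the remainder is a routine exponential-decay estimate. I do not foresee any real obstacle, and the statement is phrased with a floating $k$ mainly because that is the form in which the estimate will be convenient to apply in the balance argument that follows.
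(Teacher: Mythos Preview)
Your argument is correct: factoring out $s^k$ reduces the inequality to $(1-s^{-n})^m < s^{-2r}$, which is independent of $k$ and follows immediately from $0 \le 1-s^{-n} < 1$ (with the degenerate case $s=1$ handled separately as you note). The paper states this lemma without proof, so there is nothing further to compare; your write-up supplies exactly the routine verification the paper omits.
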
 

\begin{theorem}
Let $\theta \in \mathcal{R}^\Z$ be a recurrent rule distribution such that $H_\theta$ is surjective. Then $H_\theta$ is balanced.
\end{theorem}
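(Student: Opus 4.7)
The plan is to argue by contradiction: assume $H_\theta$ is surjective but not balanced, and produce a pattern on some large finite interval $I$ that has no preimage under $H_{\theta|I}$, which contradicts surjectivity of $H_\theta$. A routine reduction (summing fibres over all extensions of a pattern) shows that failure of balance on any finite domain propagates to failure on a containing interval, so I may assume $D = [0, d-1]$ is an interval. Let $r$ be a common upper bound on the radii of rules in $\R$ and set $s = |\Sigma|$, $E = N_\theta(D) = [-r, d+r-1]$, and $n := |E| = d+2r$. Surjectivity of $H_\theta$ forces every $p \in \Sigma^D$ to have $|H_{\theta|D}^{-1}(p)| \geq 1$ (extend $p$ globally and pull back). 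Since the fibres sum to $s^{|E|}$ with average $s^{2r}$, failure of balance yields some $p_0 \in \Sigma^D$ with $N(p_0) := |H_{\theta|D}^{-1}(p_0)| \leq s^{2r} - 1$.

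Next I would exploit recurrence of $\theta$ applied to the finite rule pattern $\theta_{|E}$: there are infinitely many shifts $k$ for which $\theta_{|E+k}$ is a translated copy of $\theta_{|E}$. For any $m$, pick $k_1 < \ldots < k_m$ among these with consecutive spacing at least $n$, ensuring the translates $E+k_i$ are pairwise disjoint, and let $I$ be a finite interval containing $\bigcup_i (D+k_i)$, so that $\bigcup_i (E+k_i) \subseteq N_\theta(I)$. Then define
\begin{align*}
P = \{\, p \in \Sigma^I : p_{|D+k_i} = \sigma^{-k_i}(p_0) \text{ for every } i = 1, \ldots, m \,\},
\end{align*}
which has $|P| = s^{|I| - md}$.

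The central computation is the total preimage mass $T := \sum_{p \in P} |H_{\theta|I}^{-1}(p)|$. Since a cell $x \in D+k_i$ reads only from $E+k_i \subseteq N_\theta(I)$, the $E+k_i$ are disjoint, and the local rules on each $E+k_i$ are translated copies of those on $E$ by construction, the condition defining $P$ decouples across $i$: each $E+k_i$ contributes independently $N(p_0)$ admissible restrictions for $q_{|E+k_i}$, while the remaining $|N_\theta(I)| - mn$ cells of $N_\theta(I)$ are free. Hence
\begin{align*}
T \;=\; N(p_0)^m \cdot s^{|N_\theta(I)| - mn} \;\leq\; (s^{2r}-1)^m \cdot s^{|N_\theta(I)| - mn}.
\end{align*}

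Finally I would invoke Lemma \ref{ineq2} with its parameter $n$ taken to be $2r$ and $k := |N_\theta(I)| - md$: for all sufficiently large $m$ the right-hand side above is strictly less than $s^{k-2r} = s^{|I| - md} = |P|$. So $T < |P|$, forcing at least one $p \in P$ with $|H_{\theta|I}^{-1}(p)| = 0$, which contradicts surjectivity of $H_\theta$ (hence of $H_{\theta|I}$). The main obstacle is the factorisation underlying the formula for $T$, which rests on pairwise disjointness of the $E+k_i$ inside $N_\theta(I)$ and on exact matching of rule patterns on each $E+k_i$ with those on $E$; once these are arranged via recurrence, Lemma \ref{ineq2} closes the numerical inequality cleanly, with the choice $n := 2r$ being exactly what converts $N(p_0)^m \leq (s^{2r}-1)^m$ into the form needed.
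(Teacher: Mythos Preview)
Your proposal is correct and follows essentially the same route as the paper: assume a pattern $p_0$ on an interval has too few preimages, use recurrence of $\theta$ to find $m$ well-separated translated copies of the relevant rule pattern, count preimages of the ``product'' pattern via the factorisation $N(p_0)^m \cdot s^{(\text{free cells})}$, and invoke Lemma~\ref{ineq2} to conclude this count is smaller than the number of target patterns, contradicting surjectivity. The only cosmetic difference is that you instantiate Lemma~\ref{ineq2} with parameter $n=2r$ (after the substitution $|N_\theta(I)|-mn = k-2rm$), whereas the paper takes $n=|E|$ and inserts one extra elementary inequality; both reach the same conclusion.
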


\begin{proof}
Let $D\subseteq \Z$ be a finite domain and $E=N_\theta (D)$. Without loss of generality, we can assume all rules in $\R$ have radius $r$. Suppose there is $p \in \Sigma^D$ such that the number of $H_{\theta|D}$ pre-images is $t\neq s^{|E|-|D|}$. We can select $p$ such that $t< s^{|E|-|D|}$, because if every pattern in domain $D$ were to have at least $s^{|E|-|D|}$ patterns, since there are only $s^E$ patterns in domain $E$, every pattern would have exactly $s^{|E|-|D|}$ pre-images, contradicting our assumption.

Because $\theta$ is recurrent, it contains infinitely many copies of $\theta_{|D}$. Let $n \in \Z_+$ be such that $|D|=n-2r$. Let then $m\in \Z_+$ be large enough that $(s^n-1)^m s^{k-nm}<s^{k-2r}$ for any $k\in \Z_+$. By Lemma \ref{ineq2} such a number exists.

Let $D'$ be a $k-2r$ wide segment where $k\in \Z_+$ is large enough that a $\theta_{|D'}$ contains $m$ disjoint copies of $\theta_{|D}$ that are at least $2r$ apart, and let $E'= N_\theta (D')$. Of course now $|E'|=k$. Let $\mathcal{B} \subseteq \Sigma^{D'}$ be the set of patterns such that there is a copy of $p$ on each disjoint copy of $\theta_{|D}$. Because the rest of the cells may be freely chosen, $|\mathcal{B}|=s^{|D'|-m|D|} = s^{k-2r-m(n-2r)}$.

Let then $\mathcal{A} = \set{q \in \Sigma^{E'}}{H_{\theta|D'}(q) \in \mathcal{B}}$. Then $|\mathcal{A}|=t^m \cdot s^{|E'|-m|E|} = t^m \cdot s^{k-mn}$, because on each $n$ wide segment centered the copies of $\theta_{|D}$ must be a copy of one of the $t$ pre-images of $p$, and the other cells may be chosen freely. Now it remains to show that $|\mathcal{A}| < |\mathcal{B}|$. Now because $t\leq s^{2r}-1$, using the previous, we have 
\begin{align*}
|\mathcal{A}| &= t^m \cdot s^{k-mn} \leq (s^{2r}-1)^m \cdot s^{k-mn}\leq (s^{2r}- s^{-(n-2r)})^m\cdot s^{k-mn}\\
&= (s^n - 1)^m  \cdot s^{k-mn} \cdot s^{-m(n-2r)} < s^{k-2r} \cdot s^{-m(n-2r)} = |\mathcal{B}|.
\end{align*}
Hence $H_\theta$ is not surjective. Then if $H_\theta$ is surjective, the theorem statement holds.
\qed
\end{proof}

Though surjectivity is not enough to guarantee balance, bijectivity is. In fact, it is enough to assume that every configuration has the same number of pre-images. This also holds for any dimension, though we do not show it here.

\begin{theorem} \label{bij_bal}
Let $\theta \in \mathcal{R}^\Z$ be a rule distribution such that $H_\theta$ is bijective. Then $H_\theta$ is balanced.
\end{theorem}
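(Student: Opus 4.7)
Plan: The target is to show that for every finite $D \subseteq \Z$ with $E = N_\theta(D)$, every pattern $p \in \Sigma^D$ has exactly $|\Sigma|^{|E|-|D|}$ pre-images in $\Sigma^E$ under $H_{\theta|D}$. Since $\sum_p t(p) = |\Sigma|^{|E|}$ over the $|\Sigma|^{|D|}$ patterns, this value is already the average pre-image count, so balance is equivalent to uniformity of the counts $t(p)$.

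I would reduce the problem to a \emph{window statement}: for every sufficiently large finite interval $F$, the finite local map $H_{\theta|F^{-r}}\colon \Sigma^F \to \Sigma^{F^{-r}}$ is uniformly $|\Sigma|^{2r}$-to-1, where $F^{-r}$ denotes $F$ shrunk by $r$ at each end. Balance for an arbitrary $D$ then follows by a double-counting argument: pick $F$ with $E \subseteq F$ and $D \subseteq F^{-r}$, and count configurations $c \in \Sigma^F$ satisfying $H_{\theta|D}(c_{|E}) = p$ two ways. Grouping by the value $H_{\theta|F^{-r}}(c) \in \Sigma^{F^{-r}}$, which must be an extension of $p$ from $D$ to $F^{-r}$, gives $|\Sigma|^{|F^{-r}|-|D|} \cdot |\Sigma|^{2r} = |\Sigma|^{|F|-|D|}$; grouping instead by $c_{|E}$, which must be a pre-image of $p$ under $H_{\theta|D}$, gives $t(p) \cdot |\Sigma|^{|F|-|E|}$. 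Equating yields $t(p) = |\Sigma|^{|E|-|D|}$.

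The window statement would be attacked through injectivity of $H_\theta$. Given two distinct pre-images $c_1, c_2 \in \Sigma^F$ of the same $q \in \Sigma^{F^{-r}}$, extend both to $\bar c_1, \bar c_2 \in \Sigma^\Z$ sharing a common exterior outside $F$; injectivity forces $H_\theta(\bar c_1) \neq H_\theta(\bar c_2)$, but since these images agree on $F^{-r}$ and on $\Z \setminus F^{+r}$, they must differ within the annulus $F^{+r} \setminus F^{-r}$ of $4r$ cells, giving a pre-image count bound of $|\Sigma|^{4r}$. The main obstacle is that this naive injectivity bound is loose by a factor of $|\Sigma|^{2r}$ compared to the sharp $|\Sigma|^{2r}$ target. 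Closing this gap is the crux of the proof: I expect it requires exploiting bijectivity more globally, for instance by varying the exterior configuration and using a pigeonhole argument across exteriors. As the paper's remark that the theorem extends to uniformly $N$-to-1 maps suggests, the sharp count appears to stem ultimately from a measure-theoretic fact: uniform pre-image cardinality globally forces the pushforward of the uniform Bernoulli measure under $H_\theta$ to equal itself, which is precisely balance.
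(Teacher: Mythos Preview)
Your proposal has a genuine gap: you correctly identify that the naive injectivity argument yields only a pre-image bound of $|\Sigma|^{4r}$ rather than the required $|\Sigma|^{2r}$, and you never actually close this gap. ``Varying the exterior configuration and pigeonholing'' is a gesture, not an argument, and the measure-theoretic remark is circular (invariance of the uniform measure under $H_\theta$ \emph{is} balance, so it cannot be assumed in order to prove balance). Without the sharp window count, your double-counting reduction---which is itself valid---has nothing to feed on.

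The paper's proof sidesteps the sharp-count obstacle entirely by a factorisation trick. Fix a finite segment $D$ with $E = N_\theta(D)$, set $D' = \Z \setminus N_\theta(E)$ and $E' = \Z \setminus E$ (taking all rules to have radius $r$, so that $N_\theta(D') \subseteq E'$). For any $p \in \Sigma^D$ and any fixed infinite pattern $p' \in \Sigma^{D'}$, the set $\mathcal{C}_{p,p'} = \{c : c_{|D} = p,\ c_{|D'} = p'\}$ has cardinality $|\Sigma|^{|C|}$, where $C = \Z\setminus(D\cup D')$ is the finite buffer. Crucially, its $H_\theta$-pre-image factors as $\mathcal{A}_p \times \mathcal{B}_{p'}$, where $\mathcal{A}_p = \{q \in \Sigma^E : H_{\theta|D}(q) = p\}$ and $\mathcal{B}_{p'} = \{q' \in \Sigma^{E'} : H_{\theta|D'}(q') = p'\}$, because $E$ and $E'$ partition $\Z$ and each independently determines the image on its respective target domain. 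Bijectivity then gives $|\mathcal{A}_p|\cdot|\mathcal{B}_{p'}| = |\Sigma|^{|C|}$; since the right-hand side and $|\mathcal{B}_{p'}|$ do not depend on $p$, neither does $|\mathcal{A}_p|$, and averaging forces $|\mathcal{A}_p| = |\Sigma|^{|E|-|D|}$. The missing idea in your plan is exactly this: instead of bounding a single finite window, pair the window with its \emph{infinite complement} so that the global pre-image splits as a product, and let bijectivity supply the count directly.
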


\begin{proof}
Let $D\subseteq \Z$ be a finite segment and $p \in \Sigma^D$ a pattern. Let $E = N_\theta(D)$ and $\mathcal{A}_p = \set{q \in \Sigma^E}{H_{\theta|D}(q) = p}$.
Without loss of generality, we can assume $D$ is a contiguous segment. Let $D' = \Z \setminus N_\theta(E)$ and $E' = \Z \setminus E$. Let $C = \Z \setminus (D \cup D')$. Let $p' \in \Sigma^{D'}$ be a fixed (infinite) pattern and let $\mathcal{B}_{p'} = \set{q' \in \Sigma^{E'}}{H_{\theta|D'} (q') = p'}$. Let $p \in \Sigma^D$ be a finite pattern. Then consider $\mathcal{C}_{p,p'} = \set{c \in \Sigma^\Z}{c_{|D} = p, c_{|D'} = p' }$ and $\mathcal{C}_{p,p'}' = \set{c' \in \Sigma^\Z}{H_\theta(c') \in \mathcal{C}_{p,p'}}$. Obviously $|\mathcal{C}_{p,p'}| = |\Sigma|^{|C|}$. On the other hand, we have $|\mathcal{C}_{p,p'}'|=|\mathcal{A}_p|\cdot |\mathcal{B}_{p'}|$. Finally, because $H_\theta$ is bijective, we have $|\mathcal{C}_{p,p'}| = |\mathcal{C}_{p,p'}'|$. Therefore
\begin{align*}
|\mathcal{A}_p|\cdot |\mathcal{B}_{p'}| = |\mathcal{C}_{p,p'}'| = |\mathcal{C}_{p,p'}| = |\Sigma|^{|C|},
\end{align*}
which gives $|\mathcal{A}_p| = \frac{|\Sigma|^{|C|}}{|\mathcal{B}_{p'}|}$. Choice of $p'$ and $C$ was independent of the choice of $p$, and therefore this is true for any $p \in \Sigma^D$, and therefore every pattern in the domain $D$ has the same number of pre-images. Hence we have  $|\mathcal{A}_p| = |\Sigma|^{|E|-|D|}$.
\qed
\end{proof}

\begin{figure}
\begin{center}
	\includegraphics[scale=0.9]{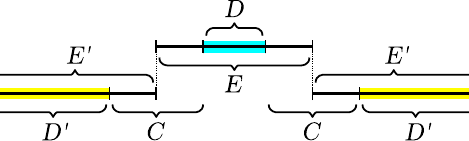}
	\caption{Naming scheme of the domains used in the proof of Theorem \ref{bij_bal}.} \label{fig_bij_bal}
\end{center}
\end{figure}

\section{Sensitivity and equicontinuity}

Sensitivity and equicontinuity are properties of general dynamical systems. To define them, some topological concepts are needed. 

\begin{definition}
Let $\Sigma$ be a state set, $c \in \Sigma^\Z$ a configuration and $D \subseteq \Sigma$ a finite domain. We denote a \emph{cylinder} as
\begin{align*}
\mathrm{Cyl} (c,D) = \set{e \in \Sigma^\Z}{c_{|D} = e_{|D}},
\end{align*}
when $\Sigma$ is clear from context.
\end{definition}

It's well known that cylinders form a topological basis for the Cantor space \cite{kari}, that is, every open set is a union of cylinders. Topologically, a set $A\subseteq \Sigma^\Z$ is dense if every cylinder contains an element of $A$, and residual if it is a countable intersection of dense open sets. Because the Cantor space is a Baire space, all residual sets are also dense. 

The following definitions are equivalent with the topological definitions of equicontinuity and sensitivity. They can naturally be extended to any dimension, but here we focus on only 1-dimensional \textsc{nuca}.

\begin{definition}
Let $H_\theta$ be a \textsc{nuca} with the state set $\Sigma$. The configuration $c \in \Sigma^\Z$ is an \emph{equicontinuity point} if for every finite $E \subseteq \Z$ there is a finite $D\subseteq \Z^d$ such that
\begin{align*}
\forall  e \in \mathrm{Cyl}(c,D)  , n\in \Z_+: H_\theta^n (e) \in \mathrm{Cyl}(H_\theta^n(c),E).
\end{align*}
We denote the set of equicontinuity points as $\mathcal{E}$. We say $H_\theta$ is \emph{equicontinuous}, if $\mathcal{E} = \Sigma^\Z$. If $\mathcal{E}$ is residual, we say $H_\theta$ is \emph{almost equicontinuous}.
\end{definition}

In uniform \textsc{ca} we find that only eventually temporally periodic \textsc{ca} are equicontinuous. However, there are (reversible) equicontinuous \textsc{nuca} which have no periodic configurations \cite{kamilya}.

\begin{definition}
Let $H_\theta$ be a \textsc{nuca} with the state set $\Sigma$. The \textsc{nuca} is called \emph{sensitive} if there exists a finite $E\subseteq \Z$ such that for every $c \in \Sigma^\Z$ and every finite $D \subseteq \Z$,
\begin{align*}
\exists  e\in \mathrm{Cyl} (c,D),n \in \Z_+ :  H_\theta^n (e) \notin \mathrm{Cyl}(H_\theta^n(c),E).
\end{align*}
\end{definition}

In 1-dimensional uniform \textsc{ca}, we find that either $\mathcal{E} = \emptyset$, which is equivalent with sensitivity, or $\mathcal{E}$ is residual. 
However, there are 2-dimensional uniform \textsc{ca} whose set of equicontinuity points is neither empty nor residual, as well as ones which have no equicontinuity points but are not sensitive. \cite{equisense}

In the 1-dimensional non-uniform case, the dichotomy found in uniform \textsc{ca} does not hold, and in fact there exist both 1D \textsc{nuca} whose set of equicontinuity points is neither empty nor residual, and 1D \textsc{nuca} which have no equicontinuity points but are not sensitive. 

\begin{theorem} \label{equicounter}
There is a rule set $\R$ and rule distribution $\theta \in \R^\Z$ such that if $\mathcal{E}$ is the set of equicontinuity points of $H_\theta$, then $\mathcal{E} \neq \emptyset$ and $\mathcal{E}$ is not residual.
\end{theorem}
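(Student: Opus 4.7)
The plan is to construct a specific \textsc{nuca} $H_\theta$ satisfying (a) at least one configuration $c^*$ is an equicontinuity point, and (b) there is a finite pattern $p$ on some finite domain $D_0$ such that no configuration $c$ with $c|_{D_0}=p$ is an equicontinuity point. The set $\mathcal{E}$ is always a $G_\delta$ subset of the Baire space $\Sigma^\Z$ (for each finite $E$, the set of configurations for which some finite $D$ witnesses equicontinuity with respect to $E$ is open, and $\mathcal{E}$ is the countable intersection over all finite $E$), so it is residual if and only if it is dense; hence exhibiting any cylinder $\mathrm{Cyl}(c,D_0)$ disjoint from $\mathcal{E}$ suffices to conclude non-residuality.

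First I would select a small rule set $\R$ containing an inert rule such as the identity $g(a,b,c)=b$, together with at least one local rule whose propagation effect is \emph{state-gated}: a rule that behaves trivially on most neighbourhoods but acts as a shift-like propagator when its neighbourhood attains a specific pattern of states. Such rules are the natural candidates for selectively enabling long-range disturbances as a function of the initial configuration. The rule distribution would then combine these two kinds of rules so that the "gate" is opened only when both the chosen positions in $\theta$ and the chosen states in $c$ align.

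Next I would design $\theta$ so that along $c^*$ no triggering neighbourhood is ever realised in any iterate, while along any configuration extending $p$ a triggering neighbourhood is inevitable and its signal propagates without bound. Verifying (a) then reduces to showing, for each finite $E$, that the cells ever consulted in computing $H_\theta^n(c^*)|_E$ form a finite set $D$ independent of $n$. Verifying (b) reduces to showing that for any $c$ extending $p$ and any candidate finite $D$, there is a perturbation $e\in\mathrm{Cyl}(c,D)$ and some $n\in\Z_+$ with $H_\theta^n(e)(x)\neq H_\theta^n(c)(x)$ for some $x$ in a fixed finite $E$; this follows once one shows that the unavoidable signal reaches $E$ at arbitrarily large times, coming from arbitrarily far outside any finite $D$.

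The main obstacle is producing a rule whose propagation is sharply state-gated together with a distribution that makes the gate unavoidable on $\mathrm{Cyl}(c,D_0)$ yet avoidable on $c^*$. Purely sensitive local rules such as XOR shifts propagate disturbances uniformly and tend to force $\mathcal{E}=\emptyset$ wherever they appear in $\theta$, while purely inert rules give $\mathcal{E}=\Sigma^\Z$. The construction must sit precisely between: the distribution should leave only locally bounded propagation along $c^*$, but the finite pattern $p$ should certify an inevitable infinite chain of live triggering neighbourhoods for every extension. Balancing these competing requirements, while keeping the rule set finite, is the delicate engineering step.
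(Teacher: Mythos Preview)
Your proposal correctly identifies the overall strategy: since $\mathcal{E}$ is always $G_\delta$, it suffices to exhibit one equicontinuity point and one cylinder disjoint from $\mathcal{E}$. This is exactly what the paper does. However, what you have written is a plan, not a proof. You describe the \emph{shape} of a construction (an inert rule together with a state-gated propagating rule, combined in a distribution so that one fixed configuration never triggers propagation while every extension of some finite pattern does), and you correctly flag the ``delicate engineering step'' as the crux --- but you do not carry it out. Nothing in the proposal specifies $\Sigma$, $\R$, $\theta$, $c^*$, or $p$, and the verifications of (a) and (b) are stated as goals rather than arguments. As written, the proposal establishes nothing.

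For comparison, the paper's realisation of your plan is quite simple and worth internalising. Take $\Sigma=\{0,1\}$, let $\tau$ be the leftward traffic rule (a $1$ moves one step left into an adjacent $0$), and set $\theta(x)=\tau$ for $x>0$ and $\theta(x)=\mathrm{id}$ for $x\le 0$. The all-$1$'s configuration is an equicontinuity point: for any finite $E$ with rightmost cell $k$, the window $D=E\cup[0,k]$ is invariant because the identity at cell $0$ pins a $1$ there, and a $1$ under $\tau$ can only become $0$ when its left neighbour is $0$, so the block of $1$'s on $[0,k]$ persists forever. Conversely, if $c(0)=0$ then $c$ is not an equicontinuity point: with $E=\{1\}$, for any finite $D$ compare the two extensions $e_0,e_1\in\mathrm{Cyl}(c,D)$ that are $0$ (resp.\ $1$) outside $D$; the finitely many $1$'s of $e_0$ on the right eventually drain into the $0$ at cell $0$, while $e_1$ has infinitely many $1$'s arriving from the right, so the two orbits must eventually disagree at cell $1$. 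Thus the cylinder $\{c:c(0)=0\}$ is disjoint from $\mathcal{E}$. This is precisely the state-gated mechanism you sketched; the missing content in your write-up is this explicit example and its verification.
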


\begin{proof}
Let $\Sigma = \{0,1\}$ and $\R = \{\mathrm{id}, \tau\}$ where $\mathrm{id}$ is the identity rule and and $\tau$ is the leftward traffic rule, a radius $1$ which maps
\begin{align*}
\tau (a,b,c) = \begin{cases}
0, \text{ if } a=0, b=1, \\
1, \text{ if } b=0, c=1, \\
b, \text{ otherwise,}
\end{cases}
\end{align*}
for any $a,b,c\in\Sigma$. Let $\theta \in \R^\Z$ be such that $\theta(x) = \tau$ if $x> 0$ and $\theta(x)=\mathrm{id}$ if $x\leq 0$. Let $\mathcal{E}$ be the set of equicontinuity points of $H_\theta$. The operation of $H_\theta$ is illustrated in Figure \ref{fig_equi1}.

\begin{figure}
\begin{center}
\includegraphics[scale=0.7]{"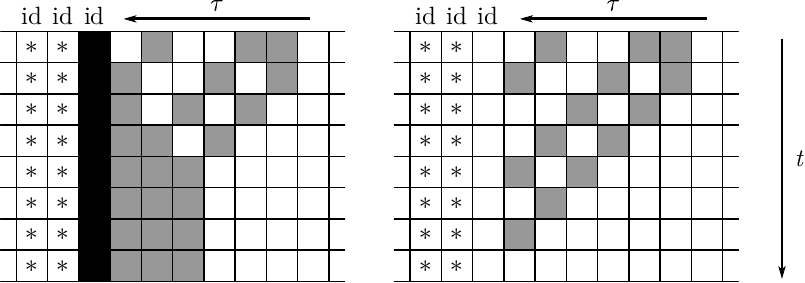"}
\caption{Space-time diagram of the \textsc{nuca} defined in the proof of Theorem \ref{equicounter}. On the left, cell $0$ is set to "1" and doesn't let traffic pass, whereas on the right its set to "0" and annihilates traffic.} \label{fig_equi1}
\end{center}
\end{figure}


First to show that $\mathcal{E} \neq \emptyset$. Let $c_1 \in \Sigma^\Z$ be a configuration of all 1's. Because every cell in $c_1$ sees only 1's, clearly $H_\theta (c_1) = c_1$. Let $E \subseteq \Z$ be a finite segment and let $k\in \Z$ be the rightmost cell of $E$ and let $D = E \cup [0,k]$. Let then $e \in \mathrm{Cyl}(c_1,D)$. For all $x\leq 0$ obviously $H_\theta^n (e) (x) = e(x)$ for any $n \geq 1$. Then if $k\leq 0$, we have $H_\theta^n (e) (x) = e(x) = c(x) = H_\theta^n (c)(x) $ for all $x \in E$. 

Suppose then $k> 0$. For any cell $x>0$, if $e(x)=1$, then $H_\theta(e)(x)=0$ only if $e(x-1) = 0$. However because for all $x \in [0,k]$, $e(x)=1$, and by the previous $H_\theta(e)(0)=1$, then also $H_\theta(e)(x) = 1$. Now we have $H_\theta (e) \in \mathrm{Cyl} (c_1, D)$ and because $e$ was arbitrary, then $H_\theta (\mathrm{Cyl} (c_1, D) ) \subseteq \mathrm{Cyl} (c_1, D)$ and hence $H_\theta ^n (e) \in \mathrm{Cyl} (c_1, D) \subseteq \mathrm{Cyl} (c_1, E)$ for all $n>0$. Therefore $c_1 \in \mathcal{E}$.\\

Now to show that $\mathcal{E}$ is not residual. Because residual sets in a Baire space are dense, it is enough to show that $\mathcal{E}$ is not dense. Let $c \in \Sigma^\Z$ be any configuration such that $c(0) = 0$. We show that $c \notin \mathcal{E}$. Let $E = \{1\}$ and $D \subseteq \Z$ be any finite domain. Without loss fo generality we can assume $1 \in D$. Informally, the idea is that the leftward traffic is annihilated at cell $0$ and traffic from arbitrarily far to the left can then arrive at $E$. 

Let $e_0,e_1\in \mathrm{Cyl}(c,D)$ be such that for all $x \in \Z \setminus D$, $e_0(x) = 0$ and $e_1(x) = 1$. Firstly, there are finitely many $x>0$ such that $e_0(x) = 1$. Also the number of such cells can only decrease, since under the rule $\tau$, when a "0" cell turns into a "1" cell, another "1" cell turns into a "0" cell. Then for any $t\in \Z_+$ such that $H_\theta^t (e_0)(1) = 1$, the number of cells $x>0$ such that $H_\theta^{t+1} (e_0)(x) = 1$ is greater than the number of such cells with $H_\theta^{t} (e_0)(x) = 1$. On the other hand if for some $k>0$ and $H_\theta^t (e_0)(i) = 0$ for all $i \in [1,k]$ and $H_\theta^t (e_0)(i) = 1$ and $H_\theta^t (e_0)(k+1)=1$, then $H_\theta^{t+k} (e_0)(1) = 1$. Hence there is some $n\geq 0$ such that for all $t\geq n$, $H_\theta^{t} (e_0)(1) = 0$, because the number of positive "1" cells decreases to 0.

On the other hand, there are infinitely many $t \geq 0$ such that $H_\theta^t (e_1)(1) = 1$, because there are infinitely many "1" cells to the right of 1.   Now there must be some $t\geq 0$ such that $H_\theta^t(e_0)(1) \neq H_\theta^t(e_1)(1)$. Therefore either $H_\theta^t (e_0) \notin \mathrm{Cyl}(c,E)$ or $H_\theta^t(e_1) \notin \mathrm{Cyl}(c,E)$. In either case, because $D$ was arbitrary, this means $c$ is not an equicontinuity point. Now since no configuration with "0" at cell 0 is an equicontinuity point, $\mathcal{E}$ is not dense, and therefore not residual.



\qed
\end{proof}

\begin{theorem} \label{equicounter2} 
There is a rule set $\mathcal{R}$ and rule distribution $\theta \in \mathcal{\Z}$ such that $H_\theta$ is not sensitive, and if $\mathcal{E}$ is the set of equicontinuity points of $H_\theta$, then $\mathcal{E} = \emptyset$.
\end{theorem}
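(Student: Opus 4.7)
The plan is to construct an explicit NUCA witnessing both properties. A natural template, suggested by Theorem~\ref{equicounter}, is to use a rule set $\R = \{\mathrm{id}, \tau\}$ consisting of the identity and the leftward traffic rule, with a distribution $\theta$ that interleaves long stretches of $\tau$ with scattered $\mathrm{id}$-barriers. The two required properties pull against each other: $\mathcal{E} = \emptyset$ demands that \emph{every} configuration has some cell with unbounded dependence, whereas non-sensitivity demands that \emph{every} finite window $E$ has some configuration whose trajectory on $E$ is controlled by a finite $D$. The whole game is to place the identity cells so that neither requirement can dominate globally.

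To show $\mathcal{E} = \emptyset$, the strategy is: given any configuration $c$, identify a cell $x_c$ sitting in a sufficiently long traffic stretch such that a car inserted arbitrarily far to the right of $x_c$ travels leftward under $\tau$ and alters $x_c$'s value at some finite time, so that no finite $D$ stabilizes $x_c$ for $c$. The crucial flexibility is that $x_c$ is allowed to depend on $c$; this is what rules out natural equicontinuity candidates like the all-$1$ configuration from the previous example, which one destabilizes by choosing $x_c$ on the side where the configuration happens to be breakable (for instance, by introducing a hole into a jammed region from outside the jammed part of $c$).

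To show $H_\theta$ is not sensitive, the strategy is: given any finite $E$, construct a locally jammed configuration $c_E$ --- for instance one that is all $1$ on a large interval $I$ containing $E$ and flanked by identity cells of $\theta$ --- together with a finite $D_E \supseteq E$ (essentially $I$ together with a small margin), and argue that any perturbation outside $D_E$ produces only traffic disturbances (holes moving rightward at unit speed, or being absorbed upon hitting an identity cell) that never reach the cells of $E$. Thus the $E$-trajectory of $c_E$ is unchanged, so $E$ is not a sensitivity window.

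The main obstacle is producing a single $\theta$ in which these two arguments coexist: enough unbroken traffic to propagate information from infinity to each $c$, yet enough identity barriers around each finite window to shield some $c_E$. A straightforward distribution (uniform traffic, or identity on a half-line) fails one side of this balance, so the construction will need an asymmetric or sparse placement of identity cells together with a careful case analysis of traffic dynamics around those barriers. Much of the work is the verification: one checks that the $x_c$-candidate really is reached by perturbations for \emph{every} $c$, and that the flanking identity cells really do confine perturbations away from $E$ for the $c_E$ chosen for each finite $E$.
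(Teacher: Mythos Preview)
Your proposal is a strategy outline rather than a construction, and the specific template you commit to --- the two-symbol rule set $\{\mathrm{id},\tau\}$ with scattered identity cells --- appears unable to achieve $\mathcal{E}=\emptyset$ and non-sensitivity simultaneously. The obstacle is the one you flag but do not resolve: uniform configurations. Under $\tau$, an identity cell holding the value $1$ absorbs rightward-moving $0$-holes, and an identity cell holding $0$ absorbs leftward-moving $1$'s. Consequently, if identity cells are unbounded to the left then the all-$1$ configuration is an equicontinuity point; if they are unbounded to the right then all-$0$ is; and if there are only finitely many identity cells, the configuration that is all-$0$ to the left of them and all-$1$ to the right is an equicontinuity point (perturbations on either side move outward, and the identity block decouples the two halves). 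Your suggestion of ``choosing $x_c$ on the side where $c$ is breakable'' fails precisely for these configurations, which have no breakable side. So some distribution-independent equicontinuity point survives, and $\mathcal{E}\neq\emptyset$.

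The paper's construction takes a different route: rather than engineering the distribution $\theta$, it enlarges the alphabet to $\{0,1,2,3\}$ and designs the local rule so that the \emph{blocking mechanism itself} is inherently non-global. A pattern $\ldots 1112$ anchored at the origin protects everything to its left from incoming $3$-signals (this yields non-sensitivity, with a different anchor for each $E$), but a $2$ that is not supported by an unbroken run of $1$'s back to the origin decays to $0$. Hence in any configuration there is a leftmost cell to the right of which no stable blocker exists, and $3$'s inserted far to the right reach that cell; this gives $\mathcal{E}=\emptyset$. The distribution is simply $\mathrm{id}$ on the non-positive cells and the new rule on the positive cells --- no sparse placement is needed. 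The essential idea you are missing is a blocker that can be placed to shield \emph{any one} window but whose validity condition prevents it from shielding \emph{all} windows at once; the two-symbol traffic rule has no such object.
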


\begin{proof}
Let $\Sigma = \{0,1,2,3 \}$ and $\mathcal{R} = \{\id, f \}$ a rule set, where $\id$ is the identity rule, and $f:\Sigma^3 \rightarrow \Sigma$ is a radius-1 rule that maps
\begin{align*}
&(a,0,3)\mapsto 3, (1,1,1)\mapsto 1, (1,1,2)\mapsto 1, (a,1,3)\mapsto 3,\\
&(1,2,c)\mapsto 2, (a,3,3)\mapsto 3, \text{ and } (a,b,c)\mapsto 0 \text{ otherwise},
\end{align*} 
where $a,b,c\in \Sigma$. Let $\theta \in \mathcal{R}^\Z$ be such that $\theta (x) = \mathrm{id}$ if $x\leq 0$ and $\theta (x) = f$ if $x>0$. The operation of this \textsc{nuca} is illustrated in Figure \ref{fig_equi2}.

\begin{figure}
\begin{center}
\includegraphics[scale=0.8]{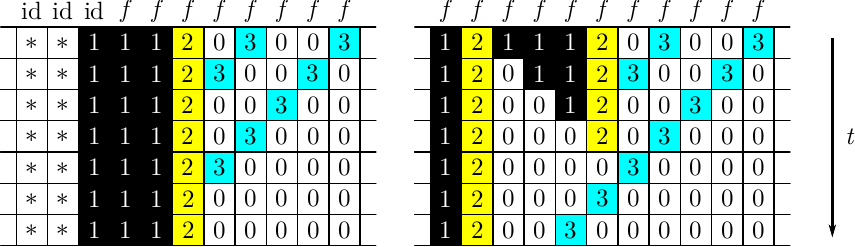}
\caption{Space-time diagrams of the \textsc{nuca} defined in the proof of Theorem \ref{equicounter2}. The left diagram illustrates how a segment arbitrarily far from the origin can be "protected" from 3-cells with a blocking pattern, and the right diagram illustrates why said blocking pattern can't be copied to protect every cell from 3-cells.} \label{fig_equi2}
\end{center}
\end{figure}

First to show that $H_\theta$ is not sensitive: let $E\subseteq [i,j] \subseteq \Z$ be an arbitrary finite domain. Let $c\in \Sigma^\Z$ be such that $c(j) = 2$ and $c(x) = 1$ if $x\neq j$, and let then $D=[\min (i,0),j]$. Let $e \in \cyl (c,D)$. If $x\in[i, 0]$, then obviously $H_\theta (e)(x) = c(x)$, because $\theta (x) = \id$. For every $x \in [1,j-1]$, $H_\theta (e)(x) = f(1,1,a) = 1 = c(x)$, where $a \in \{1,2\}$ and $H_\theta (e) (j) = f(1,2,b) = 2 = c(j)$, where $b \in \Sigma$. Then $H_\theta (e) \in \cyl (c,D)$, and because $e$ was an arbitrary element of $\cyl (c,D)$, then $H_\theta ^n (e) \in \cyl (c,D)$ for all $n>0$. Since also $c \in \cyl (c,D)$, and $E \subseteq D$, we have $H_\theta ^n (e) \in \cyl ( H_\theta^n (c),D )\subseteq \cyl (H_\theta ^n (c), E)$ for all $n>0$. Therefore $H_\theta$ is not sensitive.

Then to show that $\mathcal{E} = \emptyset$. First we show that if a "2"-cell (on the right side) has any non-"1"-cell between it and the origin, it will eventually become a "0"-cell. 
Let $c \in \Sigma^\Z$ be a configuration with  $x_1,x_2 \in \Z$, $0\leq x_1< x_2$ such that $c(x_1)\neq 1$, $c(x_2)=2$ and $c(x) = 1$ when $x_1 < x < x_2$. Suppose for some $n \in [0, x_2-x_1-2]$ we have 
\begin{align*}
H_\theta ^n (c)(x_1+n) \neq 1, \:
H_\theta ^n (c)(x_2) = 2, \:
H_\theta ^n (c)(x) = 1,
\end{align*}
if $x_1+n < x < x_2$. Then under the local rule $f$,
\begin{align*}
H_\theta ^{n+1} (c)(x_1+n+1) = 0,\:
H_\theta ^{x+1} (c)(x_2) = 2, \:
H_\theta ^{n+1} (c)(x) = 1,
\end{align*}
if $x_1+n+1 < x < x_2$. Then by induction, $H_\theta ^{x_2-x_1-1}(c)(x_2-1)=0$ and $H_\theta ^{x_2-x_1-1}(c)(x_2)=2$, and hence $H_\theta ^{x_2-x_1}(c)(x_2)=0$.

Let now $c \in \Sigma^\Z$. We show that $c \notin \mathcal{E}$. Let $x\geq 1$ be the smallest integer such that $c(x-1)=2$. If no such cell exists, we let $x=0$. Let $E=\{x\}$, and let $D=[y,z]$ with any $y,z \in \Z$, $y\leq z$. Without loss of generality, we can assume $x \in D$. Let $e_1,e_2 \in \cyl (c,D)$ be such that if $k \notin D$, then $e_1(k) = 0$ and $e_2(k) = 3$.

By the previous, because no non-"2"-cell can become a "2"-cell, there is $n\geq 0$ such that for all $k \in [x,z]$, $H_\theta ^n (e_1)(k) \neq 2$ and $H_\theta ^n (e_2)(k) \neq 2$. Then there are only finitely many "3"-cells to the right of $x$ in $H_\theta ^n (e_1)$, so for some $n_1 \geq n$, for all $t\geq n_1$ we have $H_\theta ^t (e_1)(x) \neq 3$. On the other hand, because there are infinitely many "3"-cells to the right of $x$ in $H_\theta ^n (e_2)$, for some $n_2 \geq n$, for infinitely many $t > n_2$, $H_\theta ^t (e_2)(x) = 3$. Then there is some $t>0$ such that $H_\theta ^t (e_1)(x) \neq 3 = H_\theta ^t (e_2)(x)$, and therefore either $H_\theta ^t (e_1) \notin \cyl (H_\theta ^t (c),E)$ or $H_\theta ^t (e_2) \notin \cyl (H_\theta ^t (c),E)$. Since this is true for any finite segment $D$, $c \notin \mathcal{E}$, and therefore $\mathcal{E} = \emptyset$.
\qed
\end{proof}

It is still open whether there exist counter-examples that have uniformly recurrent rule distributions.

\section{Expansivity}

In \textsc{ca}, expansivity is a much stronger notion of sensitivity. A \textsc{(nu)ca} is expansive if there's a fixed window where any difference between configurations propagates.

\begin{definition}
Let $\theta \in \mathcal{R}^\Z$ be a rule distribution. The \textsc{nuca} $H_\theta$ is \emph{positively expansive} if there's some finite domain $E \subseteq \Z$ such that for any $c,e\in \Sigma^\Z$, $c \neq e$, there is $n\in \Z_+$ such that $H_\theta ^n (e) \notin \cyl (H_\theta ^n (c),E)$.

The \textsc{nuca} $H_\theta$ is \emph{expansive} if it's bijective and there's a finite domain $E \subseteq \Z$ such that for any $c,e\in \Sigma^\Z$, $c \neq e$, there is $n\in \Z$ such that $H_\theta ^n (e) \notin \cyl (H_\theta ^n (c),E)$.
\end{definition}

In uniform \textsc{ca}, the usual proof that expansivite \textsc{ca} are sensitive utilizes the denseness of periodic points. In bijective \textsc{nuca}, periodic points are not necessarily dense \cite{kamilya}, so we use the Poincaré recurrence theorem instead. Let's first recall some definitions from ergodic theory.

\begin{definition}
Let $\Sigma$ be a state set. A Borel measure $\mu: \mathcal{B} \rightarrow \mathbb{R}$, where $\mathcal{B}$ is a Borel algebra over $\Sigma^\Z$, is \emph{uniform} if it maps 
\begin{align*}
\cyl (c,D) \mapsto \left( \frac{1}{|\Sigma|} \right) ^{|D|},
\end{align*}
where $c \in \Sigma^\Z$ and $D\subseteq \Z$ is a finite domain.\\ 

Let $H_\theta:\Sigma^\Z \rightarrow \Sigma^\Z$ be a \textsc{nuca}. We denote $H_\theta(\mu):\mathcal{C}\rightarrow \mathbb{R}$ as the measure that gives $H_\theta(\mu)(A) = \mu (H_\theta^{-1} (A))$ for any $A \in \mathcal{B}$. We say that measure $\mu$ is \emph{invariant under} $H_\theta$ if $H_\theta(\mu) = \mu$.
\end{definition}

It's well known that the value of a measure on cylinders uniquely determines it, that is, we need to only define a measure on cylinders. \cite{ergodic}
The following proof is essentially identical to the proof on uniform automata.
 
\begin{lemma}
Let $H_\theta:\Sigma^\Z \rightarrow \Sigma^\Z$ be a bijective \textsc{nuca}. The uniform probability measure $\mu$ is invariant under $H_\theta$.
\end{lemma}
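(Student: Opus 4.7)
The plan is to exploit the fact, stated in the excerpt just before the lemma, that a measure on $\Sigma^\Z$ is uniquely determined by its values on cylinders. So I only need to verify $H_\theta(\mu)(\mathrm{Cyl}(c,D)) = \mu(\mathrm{Cyl}(c,D)) = (1/|\Sigma|)^{|D|}$ for an arbitrary configuration $c$ and finite domain $D$. By the definition of the pushforward, $H_\theta(\mu)(\mathrm{Cyl}(c,D)) = \mu(H_\theta^{-1}(\mathrm{Cyl}(c,D)))$, so the whole problem reduces to computing the measure of this preimage.

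Next, I would decompose the preimage as a finite disjoint union of cylinders. Set $E = N_\theta(D)$ and observe that $H_\theta^{-1}(\mathrm{Cyl}(c,D))$ depends only on the restriction of a configuration to $E$, since whether $H_\theta(e)_{|D} = c_{|D}$ holds is determined by $e_{|E}$. Explicitly, if $\mathcal{A} = \{q \in \Sigma^E : H_{\theta|D}(q) = c_{|D}\}$, then
\begin{align*}
H_\theta^{-1}(\mathrm{Cyl}(c,D)) = \bigsqcup_{q \in \mathcal{A}} \mathrm{Cyl}(q,E),
\end{align*}
where the cylinders on the right are disjoint because they fix different patterns on $E$.

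Now I would apply Theorem \ref{bij_bal}: since $H_\theta$ is bijective, it is balanced, and so $|\mathcal{A}| = |\Sigma|^{|E|-|D|}$. Each cylinder $\mathrm{Cyl}(q,E)$ has uniform measure $(1/|\Sigma|)^{|E|}$, hence
\begin{align*}
\mu(H_\theta^{-1}(\mathrm{Cyl}(c,D))) = |\mathcal{A}| \cdot \left(\tfrac{1}{|\Sigma|}\right)^{|E|} = |\Sigma|^{|E|-|D|} \cdot \left(\tfrac{1}{|\Sigma|}\right)^{|E|} = \left(\tfrac{1}{|\Sigma|}\right)^{|D|},
\end{align*}
which matches $\mu(\mathrm{Cyl}(c,D))$, as required.

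I don't anticipate a serious obstacle: the argument is essentially a direct transcription of the uniform \textsc{ca} proof, with the single substantive step being the invocation of balance. The only minor care needed is verifying that the preimage of a cylinder decomposes cleanly as a disjoint union of cylinders over $E = N_\theta(D)$, which is immediate from the definition of $H_{\theta|D}$. Agreement on cylinders then extends to agreement of Borel measures by the uniqueness statement cited from \cite{ergodic}.
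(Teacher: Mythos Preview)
Your proposal is correct and follows essentially the same approach as the paper: decompose the preimage of a cylinder as a disjoint union of cylinders over $E=N_\theta(D)$, invoke Theorem~\ref{bij_bal} to count them as $|\Sigma|^{|E|-|D|}$, and conclude by additivity. You are simply a bit more explicit than the paper about the decomposition and the identification of $E$, but the argument is the same.
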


\begin{proof}
Let $C = \cyl (c, D)$ for some finite domain $D \subseteq \Z$ and $c \in \Sigma^\Z$. By Theorem \ref{bij_bal},
\begin{align*}
H_\theta^{-1} (C) = C_1 \cup C_2 \cup \ldots \cup C_n,
\end{align*}
where $C_i$ are disjoint cylinders with some finite domain $E \subseteq \Z$, and $n=|\Sigma|^{|E|-|D|}$. Then due to the additivity of measures, \cite{ergodic}
we have
\begin{align*}
\mu (H_\theta^{-1} (C) ) = \mu (C_1) + \mu (C_2) + \ldots + \mu(C_n) = n \left( \frac{1}{|\Sigma|} \right)^{|E|} = \left( \frac{1}{|\Sigma|} \right)^{|D|} = \mu (C).
\end{align*}
Therefore $H_\theta (\mu) = \mu$.
\qed
\end{proof}

It is well know that if a dynamical system has an invariant measure of full support, recurrent points in it are dense. Therefore we state the following lemma without proof. \cite{ergodic}

\begin{definition}
Let $H_\theta:\Sigma^\Z \rightarrow \Sigma^\Z$ be a \textsc{nuca}. A configuration $c \in \Sigma^\Z$ is \emph{temporally recurrent} if for any $D \subseteq \Z$ there exists $n\in \Z_+$ such that $H_\theta^n (c) \in \cyl (c,D)$.
\end{definition}

\begin{lemma}[Poincaré]\label{poincare}
Let $H_\theta:\Sigma^\Z \rightarrow \Sigma^\Z$ be a bijective \textsc{nuca}. Then temporally recurrent configurations of $H_\theta$ are dense, that is, for any cylinder $C$ there is a configuration $c \in C$ that is temporally recurrent under $H_\theta$. 
\end{lemma}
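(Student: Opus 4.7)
The plan is to derive this from the classical Poincar\'e recurrence theorem applied to the measure-preserving system $(\Sigma^\Z, \mathcal{B}, \mu, H_\theta)$, where $\mu$ is the uniform probability measure shown $H_\theta$-invariant by the preceding lemma. Three ingredients are needed: (i) $H_\theta$ is a bijective Borel-measurable transformation (bijectivity is given, measurability follows from continuity of \textsc{nuca}); (ii) $\mu$ is a finite invariant measure; and (iii) $\mu$ has full topological support, since $\mu(\cyl(c,D))=|\Sigma|^{-|D|}>0$ for every finite $D$.

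First I would enumerate the countably many cylinders with finite domain as $C_1,C_2,\ldots$, which form a basis for the topology on $\Sigma^\Z$. For each $i$, the Poincar\'e recurrence theorem applied to the set $C_i$ (of positive measure) gives that the subset
\begin{align*}
R_i \;=\; \{\, c \in C_i \;:\; H_\theta^n(c) \in C_i \text{ for infinitely many } n \in \Z_+ \,\}
\end{align*}
satisfies $\mu(C_i \setminus R_i) = 0$. Set $B = \bigcup_i (C_i \setminus R_i)$; by countable subadditivity $\mu(B)=0$.

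Next I would verify that every $c \notin B$ is temporally recurrent. Given any finite $D \subseteq \Z$, the cylinder $\cyl(c,D)$ equals some $C_i$ in the enumeration, and $c \in C_i$. Since $c \notin B$, we have $c \in R_i$, so there is $n \in \Z_+$ with $H_\theta^n(c) \in C_i = \cyl(c,D)$, which is exactly the definition of temporal recurrence. Finally, because $\mu(B)=0$ and $\mu$ has full support, the set $\Sigma^\Z \setminus B$ meets every non-empty open set; in particular, for any cylinder $C$ there exists $c \in (\Sigma^\Z \setminus B) \cap C$, a temporally recurrent configuration lying in $C$.

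There is no serious obstacle here: the argument is a clean packaging of Poincar\'e recurrence with the countability of the cylinder basis. The only subtle point worth recording in the write-up is that the \emph{bijectivity} of $H_\theta$ is used only to invoke the preceding invariance lemma; once $\mu$ is known invariant, the one-sided (forward) Poincar\'e recurrence theorem would suffice and the inverse dynamics need not be analysed explicitly.
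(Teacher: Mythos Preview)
Your argument is correct and is precisely the standard proof of the fact the paper invokes; the paper itself does not prove this lemma but simply records it as well known (invariant measure of full support implies dense recurrent points) and cites \cite{ergodic}. Your write-up just unpacks that citation, so there is no divergence in approach.
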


Now we are ready to show that expansive \textsc{nuca} are indeed sensitive.

\begin{theorem}
Let $\theta \in \mathcal{R}^\Z$ be a rule distribution such that $H_\theta$ is expansive or positively expansive. Then $H_\theta$ is sensitive.
\end{theorem}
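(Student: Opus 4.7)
My plan is to take the sensitivity window $E'$ equal to the (positive-)expansivity window $E$ and handle the two cases separately. The positively expansive case is essentially immediate: given $c \in \Sigma^\Z$ and a finite $D \subseteq \Z$, I pick any $e \in \cyl(c, D)$ with $e \neq c$ (possible since $D$ is finite), and apply positive expansivity to the pair $c, e$ to obtain $n \in \Z_+$ with $H_\theta^n(e) \notin \cyl(H_\theta^n(c), E)$, which is exactly the sensitivity condition.

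For the bijective expansive case, the complication is that expansivity only supplies some $n \in \Z$, which may be non-positive; I use Poincaré recurrence (Lemma \ref{poincare}) to push a backward separation forward in time. Starting from $c, D$, I pick any $e \in \cyl(c, D)$ with $e \neq c$ and apply expansivity to get $m \in \Z$ with $H_\theta^m(c), H_\theta^m(e)$ differing on $E$. If $m > 0$ we are done with witness $e$. Otherwise let $k = -m \geq 0$; by continuity of $H_\theta^{-k}$, I enlarge $D$ to a finite $F \supseteq D$ so that any two configurations agreeing on $F$ have $H_\theta^{-k}$-images agreeing on $E$. I then apply Poincaré's lemma to the product \textsc{nuca} $H_\theta \times H_\theta$ on $(\Sigma \times \Sigma)^\Z$ (which is bijective, with componentwise rules) to pick a \emph{jointly} temporally recurrent pair $(c^*, e^*)$ with $c^* \in \cyl(c, F)$ and $e^* \in \cyl(e, F)$.

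Choose a joint return time $n > k$ so that $H_\theta^n(c^*)$ agrees with $c^*$ on $F$ and $H_\theta^n(e^*)$ agrees with $e^*$ on $F$. Applying $H_\theta^{-k}$ together with the continuity property of $F$, $H_\theta^{n-k}(c^*) = H_\theta^{-k}(H_\theta^n(c^*))$ agrees with $H_\theta^{-k}(c^*)$ on $E$, and similarly $H_\theta^{n-k}(e^*)$ agrees with $H_\theta^{-k}(e^*)$ on $E$. Since $c^*, e^*$ agree with $c, e$ on $F$, the images $H_\theta^{-k}(c^*), H_\theta^{-k}(e^*)$ agree with $H_\theta^{-k}(c), H_\theta^{-k}(e)$ on $E$, and the latter two differ on $E$ by the original separation; hence $H_\theta^{n-k}(c^*), H_\theta^{n-k}(e^*)$ differ on $E$ at the positive time $n - k$. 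Both $c^*, e^*$ lie in $\cyl(c, D)$, so a triangle-inequality argument on the $E$-coordinates of $H_\theta^{n-k}(c), H_\theta^{n-k}(c^*), H_\theta^{n-k}(e^*)$ shows that at least one of $c^*, e^*$ is separated from $c$ on $E$ at time $n - k$, giving the required sensitivity witness. The main obstacle is the joint-recurrence step: Lemma \ref{poincare} is stated for a single \textsc{nuca}, so it has to be invoked on the product \textsc{nuca} $H_\theta \times H_\theta$ to obtain simultaneous returns; this is legitimate since the product is itself a bijective \textsc{nuca} on $(\Sigma \times \Sigma)^\Z$ whose invariant uniform probability measure is the product of those on the factors, but it is the step that really powers the reduction from negative-time separation to sensitivity.
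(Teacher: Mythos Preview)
Your proposal is correct and follows essentially the same approach as the paper: both handle the positively expansive case immediately, and in the expansive case both pass to the product \textsc{nuca} $H_\theta \times H_\theta$ on $(\Sigma\times\Sigma)^\Z$, invoke Poincar\'e recurrence (Lemma~\ref{poincare}) to obtain a jointly recurrent pair inside the prescribed cylinder, and use continuity of $H_\theta^{-k}$ to turn the negative-time separation into a positive-time one, finishing with the triangle-inequality comparison against $c$. Your write-up is in fact more explicit than the paper's about enlarging $D$ to $F$, about choosing the return time large enough ($n>k$), and about the final triangle step, but the underlying argument is the same.
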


\begin{proof}
If $H_\theta$ is positively expansive, it is clearly sensitive by definition. Suppose then that (bijective) $H_\theta$ is expansive. Let $E \subseteq \Z$ be the observation window confirming expansivity, that is, a finite domain such that for any $c_1,c_2\in \Sigma^\Z$, $c_1 \neq c_2$, there is $n\in \Z$ such that $H_\theta ^n (c_2) \notin \cyl (H_\theta ^n (c_1),E)$. Let $D\subseteq \Z$ be a finite domain and $c_1 \in \Sigma^\Z$ a configuration. It is enough to show that for some $c_2 \in \cyl (c_1,D)$, $m \in \Z_+$ and $x \in E$, we have $H_\theta^m (c_1)(x) \neq H_\theta^m (c_2)(x)$.

By expansivity there is $n\in \Z$ such that $H_\theta^n (c_1)(x) \neq H_\theta^n (c_2)(x)$ for some $x\in E$. If $n\geq 0$ the proof is complete, so assume $n<0$. Consider the space $(\Sigma \times \Sigma)^\Z$ and \textsc{nuca} $H_\phi:(\Sigma \times \Sigma)^\Z \rightarrow (\Sigma \times \Sigma)^\Z$ such that for $e \in (\Sigma \times \Sigma)^\Z$ and $e_1,e_2 \in \Sigma^\Z$ where $e(y) = (e_1(y),e_2(y))$ for all $y\in \Z$, $H_\phi$ maps $e$ such that
\begin{align*}
H_\phi(e)(y) = (H_\theta (e_1)(y),H_\theta (e_2)(y)),
\end{align*}
for all $y\in \Z$. Obviously such $e_1,e_2$ exist for any $e$, and $H_\phi$ is a \textsc{nuca}. 

Because $H_\theta$ is bijective, clearly $H_\phi$ is too. Then by Lemma \ref{poincare}, temporally recurrent configurations of $H_\phi$ are dense. Then there's a temporally recurrent $e \in (\Sigma \times \Sigma)^\Z$ with $e_1,e_2 \in \cyl (c, D)$ such that $e(y) = (e_1(y),e_2(y))$ for all $y \in \Z$ and $H_\theta^n (e_1)(x) = H_\theta^n (c_1)(x)$, $H_\theta^n (e_2)(x) = H_\theta^n (c_2)(x)$ by the continuity of $H_\theta$. Because $e$ is temporally recurrent, there is some $m>0$ such that $H_\theta^m (e_1)(x) = H_\theta^n (e_1)(x)$ and $H_\theta^m (e_2)(x) = H_\theta^n (e_2)(x)$. Then because 
\begin{align*}
H_\theta^m (e_1)(x) = H_\theta^n (c_1)(x) \neq H_\theta^n (c_2)(x) = H_\theta^m (e_2)(x),
\end{align*}
either $H_\theta^m (e_1)(x) \neq H_\theta^m (c_1)(x)$ or $H_\theta^m (e_2)(x) \neq H_\theta^m (c_1)(x)$. Therefore $H_\theta$ is sensitive.
\end{proof}

\section{Conclusions}

In summary, we've shown that a bijective \textsc{nuca} with a uniformly recurrent rule distribution is reversible and that this is not true of bijective \textsc{nuca} in general, even with recurrent rule distributions. We've shown that a bijective \textsc{nuca} is balanced, and that a surjective \textsc{nuca} with a recurrent a rule distribution is balanced, once again not true of all surjective \textsc{nuca}. 

We've also presented counter-examples for the usual dichotomy of sensitivity and equicontinuity in 1D \textsc{ca}, that is, a 1D \textsc{nuca} which is not sensitive and has no equicontinuity points, and one which has a non-residual and non-empty set of equicontinuity points. The question of whether there are such counter-examples with (uniformly) recurrent rule distributions remains open.  Finally, we've shown that (positively) expansive \textsc{nuca} are sensitive.

\subsubsection{Acknowledgements} 

We would like to acknowledge funding from the Magnus Ehrnrooth foundation and the Research Council of Finland, grant 354965.

%
%
%
%

\end{document}